\newtheorem{theorem}{Theorem}[section]  
\newtheorem{conjecture}{Conjecture} 
\newtheorem{corollary}[theorem]{Corollary}
\theoremstyle{definition}
\newtheorem*{remark*}{Remark}
\newcommand{\Z}{\mathbb{Z}}
\newcommand{\Q}{\mathbb{Q}}
\newcommand{\linef}{
\raisebox{-4mm}{
\begin{picture}(20,30)
\put(2,0){\line(0,1){30}}
\put(5,22){\scriptsize $f(\hbar)$}
\end{picture} 
}} 
\newcommand{\linefa}{
\raisebox{-4mm}{
\begin{picture}(10,30)
\put(2,0){\line(0,1){30}}
\end{picture} 
}} 
\newcommand{\linefb}{
\raisebox{-4mm}{
\begin{picture}(10,30)
\put(2,0){\line(0,1){30}}
\put(2,15){\line(1,0){10}}
\end{picture} 
}} 
\newcommand{\linefc}{
\raisebox{-4mm}{
\begin{picture}(10,30)
\put(2,0){\line(0,1){30}}
\put(2,12){\line(1,0){10}}
\put(2,18){\line(1,0){10}}
\end{picture} 
}} 
\newcommand{\linefd}{
\raisebox{-4mm}{
\begin{picture}(10,30)
\put(2,0){\line(0,1){30}}
\put(2,9){\line(1,0){10}}
\put(2,15){\line(1,0){10}}
\put(2,21){\line(1,0){10}}
\end{picture} 
}} 
\newcommand{\oneloop}{
\raisebox{-7mm}{
\begin{picture}(140,60)
\put(70,22){\oval(140,20)}
\put(5,40){\footnotesize $\frac{1}{2}\log\left(\frac{\sinh(\hbar/2)}{\hbar/2}\right) -\frac{1}{2}\log(\Delta_K(e^{\hbar}))$}
\end{picture}
}
} 
\newcommand{\twoloop}{
\raisebox{-8mm}{
\begin{picture}(80,60)
\put(40,25){\oval(80,50)}
\put(0,25){\line(1,0){80}}
\put(10,54){\footnotesize $p_{i,1}(e^{\hbar})\slash \Delta_K(e^{\hbar})$}
\put(10,29){\footnotesize $p_{i,2}(e^{\hbar})\slash \Delta_K(e^{\hbar})$}
\put(10,6){\footnotesize $p_{i,3}(e^{\hbar})\slash \Delta_K(e^{\hbar})$}
\end{picture}
}
}
\begin{document}

\title[Cosmetic crossing conjecture for genus one knot]{Cosmetic crossing conjecture for genus one knots with non-trivial Alexander polynomial}
\author{Tetsuya Ito}


\begin{abstract}
We prove the cosmetic crossing conjecture for genus one knots with non-trivial Alexander polynomial. We also prove the conjecture for genus one knots with trivial Alexander polynomial, under some additional assumptions.
\end{abstract}

\maketitle

\section{Introduction}

A \emph{cosmetic crossing} is a non-nugatory crossing such that the crossing change at the crossing preserves the knot. A cosmetic crossing conjecture \cite[Problem 1.58]{ki} asserts there are no such crossings.

\begin{conjecture}[Cosmetic crossing conjecture]
An oriented knot $K$ in $S^{3}$ does not have cosmetic crossings.
\end{conjecture}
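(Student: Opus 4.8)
The plan is to assume for contradiction that some oriented knot $K$ carries a cosmetic crossing and to extract enough rigidity from the hypothesis to force the crossing to be nugatory. First I would fix the standard geometric encoding of a crossing: it lies on a \emph{crossing disk} $D$, an embedded disk meeting $K$ transversely in exactly two points, and the crossing change is realized by $(\pm1)$--Dehn surgery along the \emph{crossing circle} $C=\partial D$. The crossing is nugatory precisely when $C$ bounds a disk in the complement $S^{3}\setminus K$, so the conjecture is equivalent to the statement that a crossing change along $C$ producing a knot isotopic to $K$ can occur only when $C$ is already nugatory. The entire difficulty is thereby concentrated in the two--component link $L=K\cup C$ and in the way twisting along $C$ acts on the isotopy class of $K$.

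The main engine would be invariance of knot invariants under the hypothesized isotopy, combined with the skein--theoretic description of how each invariant changes under a single twist along $C$. Because the crossing change is a Dehn twist supported on $D$, the relevant invariants transform in a controlled way: the Alexander polynomial through the Conway skein relation, the Tristram--Levine signatures by a bounded jump, knot Floer homology through surgery exact triangles associated with $C$, and---crucially for the genus-one analysis behind this paper---the loop expansion of the Kontsevich integral, whose one- and two-loop parts are assembled from $\Delta_K(e^{\hbar})$ and the rational functions $p_{i,j}(e^{\hbar})/\Delta_K(e^{\hbar})$. Equating the ``before'' and ``after'' values yields a system of polynomial identities in the Seifert and linking data of $(K,C)$, and the goal is to show these identities can be satisfied only when $C$ bounds a disk disjoint from $K$.

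To make such identities usable one must control the geometry of $C$, and here I would invoke sutured manifold theory. Taking a minimal-genus Seifert surface $\Sigma$ for $K$, Gabai's machinery realizes $\Sigma$ as a leaf of a taut foliation and lets one arrange $C$ to meet $\Sigma$ efficiently; the Seifert genus then either drops or is preserved under the twist in a controlled fashion, which constrains the homology class of $C$ and the number of strands passing through $D$ with coherent versus opposite orientation. This reduces the problem to finitely many combinatorial types of $(K,C)$ stratified by Seifert genus, on each of which the quantum-invariant identities can in principle be tested. Induction on Seifert genus, with the genus-one case of the present paper as the base case handled by the two-loop obstruction, is the natural organizing scheme.

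The hardest part, and the reason the conjecture remains open in general, is the high-genus inductive step together with knots of trivial Alexander polynomial. When $\Delta_K=1$ the one- and two-loop obstructions degenerate, so one must supply a genuinely new invariant---or a non-vanishing higher-loop term---that is simultaneously sensitive to non-nugatory twisting and explicit enough to equate before and after. I expect no single classical or finite-type invariant to suffice uniformly across all genera, so the realistic outcome of this program is a proof in controlled families, namely genus one with $\Delta_K\neq1$, fibered knots, and genus one with $\Delta_K=1$ under additional hypotheses, with the unrestricted conjecture awaiting an as-yet-undiscovered input.
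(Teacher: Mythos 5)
You have not proved the statement, and in fairness neither does the paper: the cosmetic crossing conjecture is open, and the paper only establishes it for genus one knots, with non-trivial Alexander polynomial (Theorem \ref{theorem:main}) or, when $\Delta_K(t)=1$, under the extra condition $\lambda(\Sigma_2(K))-2w_3(K)\not\equiv 0 \pmod{16}$ (Theorem \ref{theorem:main-trivialA}). Your proposal candidly concedes this in its final paragraph, so as a proof it fails by construction; what remains to assess is whether the program you sketch is sound, and there the gaps are concrete. First, equating invariants before and after the twist along $C$ only ever produces \emph{necessary} algebraic conditions on the data of $(K,C)$; no step in your outline converts such conditions into the geometric conclusion that $C$ bounds a disk in $S^3\setminus K$. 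No known invariant detects nugatoriness, and notably the paper never attempts this either: its actual argument is a contradiction/obstruction argument. One writes $K$ via a spine tangle $T$ with linking matrix $\begin{pmatrix} n & \ell \\ \ell & m\end{pmatrix}$, uses invariance of $\Delta_K$ to force $m=0$, then compares Ohtsuki's genus-one formula for $\widehat{\Theta}_K$ and $\widehat{\Theta}_{K'}$ (using that $v_2^{xx},v_2^{yy},v_2^{xy},v_3$ are framing-independent) to force $d=\tfrac14$ unless $d=0$; since $d\in\Z$, this rules out the cosmetic crossing when $\Delta_K\neq 1$, rather than proving the crossing nugatory. Second, your inductive scheme on Seifert genus has no mechanism: Gabai's sutured manifold theory does not reduce $(K,C)$ to ``finitely many combinatorial types'' per genus, and the loop expansion is computationally unavailable beyond genus one --- Ohtsuki's closed formula (Theorem \ref{theorem:2-loop}), on which the whole argument rests, is special to genus one.

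Third, your claim that ``when $\Delta_K=1$ the one- and two-loop obstructions degenerate'' is backwards relative to what the paper actually does. It is precisely in the case $d=0$, i.e.\ $\Delta_K(t)=1$, that the paper extracts its second obstruction from the two-loop part: with $d=0$ one gets $v_2^{yy}=0$ and $\ell\in\{0,-1\}$, and evaluating the reduced two-loop polynomial gives $\widehat{\Theta}_K(-1)-\widehat{\Theta}_K(1)=16v_3$, which via $\widehat{\Theta}_K(1)=2w_3(K)$, Mullins' formula for $\lambda_w(\Sigma_2(K))$, and $\lambda=2\lambda_w$ yields the congruence $\lambda(\Sigma_2(K))-2w_3(K)\equiv 0\pmod{16}$. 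Only the one-loop (Alexander) information degenerates; the two-loop part survives and is the source of Theorem \ref{theorem:main-trivialA} and Corollary \ref{corollary:pretzel}. So while your sketch correctly identifies the loop expansion of the Kontsevich invariant as the paper's engine, it misses the two load-bearing ideas --- the framing-independence of the tangle invariants $v_2^{\ast},v_3$, which makes the before/after comparison collapse to the two scalar equations (\ref{eqn:consequence}), and the survival of the $v_3$ term at $\Delta_K=1$ --- and it adds speculative machinery (taut foliations, surgery exact triangles, genus induction) that neither appears in the paper nor is backed by an argument.
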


Here a crossing $c$ of a knot diagram $D$ is \emph{nugatory} if there is a circle $C$ on the projection plane that transverse to the diagram $D$ only at $c$. Obviously the crossing change at a nugatory crossing always preserves the knot, so the cosmetic crossing conjecture can be rephrased that when a crossing change at a crossing $c$ preserves the knot, then $c$ is nugatory.

In \cite{bfkp} Balm-Friedl-Kalfagianni-Powell proved the following constraints for genus one knots to admit a cosmetic crossing.

\begin{theorem}\cite[Theorem 1.1, Theorem 5.1]{bfkp}
\label{theorem:genus-one}
Let $K$ be a genus one knot that admits a cosmetic crossing. Then $K$ has the following properties.
\begin{itemize}
\item $K$ is algebraically slice.
\item For the double branched covering $\Sigma_2(K)$ of $K$, $H_1(\Sigma_2(K);\Z)$ is finite cyclic.
\item If $K$ has a unique genus one Seifert surface, $\Delta_K(t)=1$.
\end{itemize}
\end{theorem}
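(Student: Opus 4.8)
The three conclusions all flow from a single geometric normalization of the crossing, so the plan is to install the crossing on a minimal genus Seifert surface and then extract each statement algebraically. A crossing change is realized by $\pm 1$ Dehn surgery on a crossing circle $L$: an unknot bounding a disk $D$ that meets $K$ transversally in two points of opposite sign, so that $\operatorname{lk}(L,K)=0$. The cosmetic hypothesis says this surgery returns the pair $(S^{3},K)$. First I would isotope $L$ onto a genus one Seifert surface $F$ of $K$. The key point is that, since the crossing change preserves the knot and hence its genus, one can arrange $D\cap F$ to be a single arc, so that $L$ becomes a non-separating simple closed curve $\gamma\subset F$ whose surface framing agrees with the disk framing; equivalently $\operatorname{lk}(\gamma,\gamma^{+})=0$ for the positive pushoff $\gamma^{+}$. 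This rests on sutured-manifold and incompressibility techniques (in the spirit of Gabai and Scharlemann--Thompson), and in my estimation it is \emph{the} main obstacle: one must rule out essential intersections of $D$ with $F$ and use minimality of the genus to control the resulting curve.

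\emph{Algebraically slice.} Because $\gamma$ is non-separating, its class is primitive in $H_{1}(F)\cong\Z^{2}$ and spans a rank-one direct summand $M$. The Seifert form on this generator is exactly $\operatorname{lk}(\gamma,\gamma^{+})=0$, so $M$ is a metabolizer; this is the definition of algebraically slice, giving the first bullet. Concretely, completing $[\gamma]$ to a basis $\{[\gamma],\mu\}$ with $\gamma\cdot\mu=1$ and using $V-V^{T}=\left(\begin{smallmatrix}0&1\\-1&0\end{smallmatrix}\right)$ puts the Seifert matrix in the form $V=\left(\begin{smallmatrix}0&b\\b-1&d\end{smallmatrix}\right)$; the Alexander polynomial $\Delta_{K}(t)=\det(V-tV^{T})$ then automatically satisfies the Fox--Milnor condition.

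\emph{Finite cyclic $H_{1}(\Sigma_{2}(K))$.} Since $H_{1}(\Sigma_{2}(K))=\operatorname{coker}(V+V^{T})$ and $V+V^{T}=\left(\begin{smallmatrix}0&2b-1\\2b-1&2d\end{smallmatrix}\right)$, the group has order $(2b-1)^{2}$ and is cyclic precisely when $\gcd(2b-1,d)=1$. To force this I would push the cosmetic surgery through the double branched cover by the Montesinos trick: as $\operatorname{lk}(L,K)=0$, the circle $L$ lifts to two components $\tilde{L}_{1}\sqcup\tilde{L}_{2}\subset\Sigma_{2}(K)$, and the lifted $\pm 1$ surgeries return $\Sigma_{2}(K)$ to itself. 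Tracking the classes $[\tilde{L}_{i}]$ and the automorphism they induce on $H_{1}(\Sigma_{2}(K))$, together with the linking form, should pin down the torsion and yield cyclicity. This homological bookkeeping is the second substantial difficulty.

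\emph{Trivial Alexander polynomial under uniqueness.} Realizing the crossing change as a $\pm 1$ twist of $F$ along $\gamma$ produces a second genus one Seifert surface $\hat{F}$ for $K$ whose Seifert matrix differs from $V$ only in the $(2,2)$-entry (the self-linking of the dual curve), by $\pm 1$. If $F$ is the unique minimal genus surface, then $\hat{F}$ is isotopic to $F$, so the twist extends to an ambient isotopy of $(S^{3},K)$ and the two Seifert forms are congruent over $\Z$ by an element of $SL_{2}(\Z)$. Solving this congruence forces the off-diagonal product $b(b-1)$ to vanish, and then $\Delta_{K}(t)=-(b-tc)(c-tb)\doteq 1$. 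Making the implication \emph{unique surface $\Rightarrow$ twisted surface equals the original} rigorous, with the correct equivariance and framing, is the delicate step here.
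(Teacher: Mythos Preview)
This theorem is quoted from \cite{bfkp}; the present paper only reviews the argument for the first bullet (in Section~2), so that is where a direct comparison is possible. Your overall strategy---install the crossing on a minimal genus Seifert surface and read off the Seifert form---is the right one, but the way you extract the metabolizer is where it breaks.

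After arranging $D\cap F$ to be a single essential arc $\alpha$, you assert that ``$L$ becomes a non-separating simple closed curve $\gamma\subset F$ whose surface framing agrees with the disk framing, equivalently $\operatorname{lk}(\gamma,\gamma^{+})=0$''. But if $D\cap F=\alpha$ is an arc, then $L=\partial D$ meets $F$ only in the two endpoints of $\alpha$; it does not lie on $F$. What one can place on $F$ is a closed curve $a_y$ disjoint from $\alpha$ (the core of the annulus $F\setminus\alpha$), and there is no a~priori reason its surface framing should vanish---indeed, the very same geometric normalisation applies to \emph{any} crossing change that does not lower the genus, and those certainly do not always yield algebraically slice knots. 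The paper's review (following \cite{bfkp}) uses the cosmetic hypothesis a second time at this point: choose a symplectic basis $\{a_x,a_y\}$ of $H_1(F)$ with $a_x$ meeting $\alpha$ once, so that the crossing change alters only the $(1,1)$-entry of the Seifert matrix, $V=\left(\begin{smallmatrix}n&\ell\\ \ell\pm1&m\end{smallmatrix}\right)\mapsto V'=\left(\begin{smallmatrix}n\pm1&\ell\\ \ell\pm1&m\end{smallmatrix}\right)$. Now $K=K'$ forces $\Delta_K\doteq\Delta_{K'}$, and equating $\det(V-tV^{T})$ with $\det(V'-tV'^{T})$ gives $m=0$; only then does $a_y$ span a metabolizer. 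The zero self-linking is thus a \emph{consequence} of the cosmetic equality of Alexander polynomials, not of the disk/arc geometry alone.

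For the remaining two bullets the paper gives no proof beyond the citation. Your sketch for the third bullet (uniqueness of $F$ forces the twisted surface to be isotopic to $F$, hence the two Seifert matrices are $SL_2(\Z)$-congruent, which then forces the off-diagonal product to vanish and $\Delta_K=1$) is along the lines of \cite{bfkp}, once the normal form of $V$ is established as above rather than by your framing claim. Your approach to the second bullet via lifting the cosmetic surgery to $\Sigma_2(K)$ is a reasonable outline; note that cyclicity of $H_1(\Sigma_2(K))=\operatorname{coker}(V+V^{T})$ does \emph{not} follow from $m=0$ alone, since $V+V^{T}=\left(\begin{smallmatrix}2n&2\ell\pm1\\ 2\ell\pm1&0\end{smallmatrix}\right)$ has cyclic cokernel only when $\gcd(n,2\ell\pm1)=1$, so further use of the cosmetic hypothesis is indeed needed there.
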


In this paper, by using the 2-loop part of the Kontsevich invariant, we prove the cosmetic crossing conjecture for genus one knot with non-trivial Alexander polynomial.

\begin{theorem}
\label{theorem:main}
Let $K$ be a genus one knot. If $\Delta_K(t)\neq 1$, then $K$ satisfies the cosmetic crossing conjecture.
\end{theorem}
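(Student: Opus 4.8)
The plan is to argue by contradiction: assume $K$ is a genus one knot with $\Delta_K(t)\neq 1$ admitting a cosmetic crossing $c$, and extract a contradiction from the behaviour of the $2$-loop part of the Kontsevich invariant under the crossing change at $c$. The guiding principle is that every invariant of $K$ is literally unchanged by a cosmetic crossing change, yet the $2$-loop polynomial can \emph{also} be computed through a surgery/variation formula that produces an explicit local term; comparing the two forces that term to vanish, and I will show this is incompatible with $\Delta_K(t)\neq 1$.

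First I would invoke Theorem \ref{theorem:genus-one}. Since $K$ admits a cosmetic crossing it is algebraically slice, so a genus one Seifert surface $F$ carries a non-separating simple closed curve $\gamma$ with vanishing self-linking $\mathrm{lk}(\gamma,\gamma^{+})=0$, generating a metabolizer of the Seifert form. In a basis adapted to $\gamma$ the Alexander polynomial takes the genus one normal form $\Delta_K(t)=D(t-2+t^{-1})+1$, and algebraic sliceness forces $D=-n(n-1)$ for an integer $n$; the hypothesis $\Delta_K(t)\neq 1$ is exactly $D\neq 0$. The geometric input I reuse from the BFKP analysis is that the crossing circle $C$ of $c$ satisfies $\mathrm{lk}(C,K)=0$ and that, after isotopy, the crossing change is realized as $\pm 1$-surgery on $C$ localized near $\gamma$ on $F$.

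Next I would compute the change of the $2$-loop polynomial $\Theta_K$ under this crossing change. Because $C$ is null-homologous in the exterior it lifts to the infinite cyclic cover, and the relevant datum is the equivariant self-linking $\ell_\gamma(t)=\sum_{k}\mathrm{lk}(\gamma,t^{k}\widetilde{\gamma})\,t^{k}\in\Q[t^{\pm 1}]$, a rational lift of the Blanchfield self-pairing of $\gamma$; by construction $\ell_\gamma(1)=\mathrm{lk}(\gamma,\gamma^{+})=0$. The key step is a variation formula expressing $\Theta_{K_+}-\Theta_{K_-}$, in the theta-graph presentation whose three edges carry propagators $p_{i,j}(t_j)/\Delta_K(t_j)$, as an explicit $S_3$-symmetrization built from $\ell_\gamma$ and $1/\Delta_K$ on the locus $t_1t_2t_3=1$. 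Since the crossing is cosmetic, $K_+=K_-=K$, so $\Theta_{K_+}-\Theta_{K_-}=0$ and this symmetrized expression must vanish identically. Feeding the genus one normal form together with $\ell_\gamma(1)=0$ into the formula, the leading part of the numerator becomes a nonzero multiple of $D$ times a fixed nonzero symmetric polynomial, so vanishing forces $D=0$, i.e. $\Delta_K(t)=1$; this contradiction shows $c$ was in fact nugatory.

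The main obstacle, and the technical heart of the argument, is establishing the variation formula for $\Theta_K$ under a single crossing change and pinning down its non-vanishing. This requires translating the $\pm 1$-surgery on $C$ into clasper/surgery calculus for the Kontsevich invariant, isolating precisely the $2$-loop contribution while discarding the $1$-loop Alexander part already governed by Melvin--Morton--Rozansky and the higher-loop corrections, and then verifying that the resulting symmetric rational expression, reduced modulo the symmetries of $\Theta_K$ and the relation $t_1t_2t_3=1$, carries a genuinely nonzero coefficient proportional to $D$. Controlling the denominators $\Delta_K(t_j)$ and the inherent ambiguity in the propagators $p_{i,j}$ is the delicate bookkeeping on which the rigour of the non-vanishing ultimately rests.
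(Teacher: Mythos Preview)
Your proposal has the right architecture---compare the $2$-loop invariant before and after the cosmetic crossing change and force the Alexander coefficient to vanish---but it is a research plan rather than a proof. The step you yourself flag as ``the main obstacle,'' namely a variation formula for $\Theta_{K_+}-\Theta_{K_-}$ expressed through an equivariant self-linking $\ell_\gamma(t)$ and clasper calculus, is never established; you describe what it would require (isolating the $2$-loop contribution, controlling the $\Delta_K(t_j)$ denominators, checking non-vanishing modulo the theta-graph symmetries) without carrying any of it out. As written, the argument assumes its own conclusion at the decisive point.

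The paper avoids this entirely by invoking Ohtsuki's closed formula for the \emph{reduced} $2$-loop polynomial of a genus one knot (Theorem~\ref{theorem:2-loop}). The crucial structural fact is that the BFKP analysis presents both $K$ and the crossing-changed knot $K'$ via spine tangles $T,T'$ that are \emph{identical as unframed tangles} and differ only in one framing entry of the linking matrix, $n\mapsto n\pm 1$. Since the finite type invariants $v_2^{xx},v_2^{yy},v_2^{xy},v_3$ appearing in Ohtsuki's formula are framing-independent, $\widehat{\Theta}_K(t)-\widehat{\Theta}_{K'}(t)$ is an explicit polynomial in $n,\ell,d,v_2^{yy}$, and setting it to zero yields $d\bigl(-\tfrac{2d+1}{3}-4v_2^{yy}\bigr)=0$ together with $\tfrac{d(4d-4)}{3}+4v_2^{yy}(2d-1)=0$, which forces $d=0$. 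This replaces your hoped-for variation formula with a two-line computation; the heavy lifting is already done in \cite{oht1}. There is also a small geometric slip in your setup: the crossing change alters the framing of the basis curve $a_x$ that \emph{meets} the crossing arc, not a curve ``near'' the metabolizer $\gamma=a_y$; the metabolizer appears because $m=0$, not because the surgery is localized there.
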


For genus one knot $K$ with $\Delta_K(t)=1$ we get an additional constraint for $K$ to admit a cosmetic crossing. Let $\lambda$ be the Casson invariant of integral homology spheres and let $w_3(K)=\frac{1}{36}V'''_K(1)+\frac{1}{12}V''_K(1)$ be the primitive integer-valued degree $3$ finite type invariant of $K$. Here $V_K(t)$ is the Jones polynomial of $K$. 

\begin{theorem}
\label{theorem:main-trivialA}
Let $K$ be a genus one knot with $\Delta_K(t)=1$. If $\lambda(\Sigma_2(K))-2w_3(K) \not \equiv 0 \pmod{16}$, then $K$ satisfies the cosmetic crossing conjecture. 
\end{theorem}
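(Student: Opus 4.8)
The plan is to run the same strategy as for Theorem~\ref{theorem:main}, extracting information from the 2-loop part of the Kontsevich invariant, but now to track the residual order-three data that survives when the Alexander polynomial is trivial. Suppose, for contradiction, that $K$ is a genus one knot with $\Delta_K(t)=1$ that admits a cosmetic crossing $c$. Since $\Delta_K(-1)=\pm1$, the double branched cover $\Sigma_2(K)$ is an integral homology sphere, so $\lambda(\Sigma_2(K))$ is defined, and by Theorem~\ref{theorem:genus-one} the knot $K$ is algebraically slice with $H_1(\Sigma_2(K);\Z)$ trivial. I would first record, exactly as in the non-trivial case, that $c$ is supported on a genus one Seifert surface $F$ along a simple closed curve $\gamma$ whose class generates a metabolizer of the Seifert form, and that the crossing change is realized by $(\pm1)$-surgery on a crossing circle $C$ with $\mathrm{lk}(C,K)=0$.

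Next I would pass to the double branched cover. Because $\mathrm{lk}(C,K)=0$, the circle $C$ lifts to $\Sigma_2(K)$ and the crossing change lifts to a surgery on the lift $\widetilde C$; since the crossing is cosmetic, the surgered manifold is again $\Sigma_2(K)$. This produces two surgery presentations of the same homology sphere, which I would feed into the surgery formula for the Casson invariant. When $\Delta_K(t)=1$ the first-order (Alexander) term that drives the proof of Theorem~\ref{theorem:main} vanishes identically, so this formula carries no information at the level that settled the non-trivial case, and the content is pushed into the next order.

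The heart of the argument is therefore to identify the surviving order-three data. On one side, the relevant coefficient of the 2-loop polynomial specializes, under the evaluation governing the $\Z/2$-action defining $\Sigma_2(K)$, to $\lambda(\Sigma_2(K))$, this being the degree-one LMO/Casson content of the branched cover; under the evaluation at $t=1$ it recovers the degree-three finite type invariant $w_3(K)=\frac{1}{36}V_K'''(1)+\frac{1}{12}V_K''(1)$. Comparing the two surgery presentations of $\Sigma_2(K)$ and invoking Ohtsuki's symmetry of the 2-loop polynomial constrains the combination $\lambda(\Sigma_2(K))-2w_3(K)$; tracking the integral framings occurring in the genus one surgery description shows the constraint is precisely $\lambda(\Sigma_2(K))-2w_3(K)\equiv0\pmod{16}$. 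The hypothesis $\lambda(\Sigma_2(K))-2w_3(K)\not\equiv0\pmod{16}$ then contradicts the existence of the cosmetic crossing.

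The main obstacle is the precise bookkeeping in the previous paragraph: establishing the exact normalization under which the order-three coefficient simultaneously computes $\lambda(\Sigma_2(K))$ and $w_3(K)$, and verifying that the indeterminacy coming from the choice of metabolizing curve and the $(\pm1)$-surgery framings is exactly $16\Z$, rather than a coarser or finer lattice. I expect the factor $16$ to arise from the interaction of the factor $2$ in the two-fold branched cover, the $\pm1$ Dehn-twist framing, and the quadratic dependence of the Casson surgery formula on the linking data; pinning down this constant, rather than merely its order of magnitude, is the delicate part of the proof.
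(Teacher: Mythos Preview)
Your proposal correctly identifies the two evaluations $\widehat{\Theta}_K(1)=2w_3(K)$ and (via Mullins) $\widehat{\Theta}_K(-1)$ in terms of $\lambda(\Sigma_2(K))$, but the mechanism you describe for extracting the congruence is not a proof, and the route through lifting the crossing circle to $\Sigma_2(K)$ and comparing Casson surgery formulas there is not how the constant $16$ actually appears. In particular, your ``comparing two surgery presentations of $\Sigma_2(K)$'' would at best constrain $\lambda(\Sigma_2(K))$ alone; there is no visible reason that $w_3(K)$, a knot invariant in $S^{3}$, should emerge from that comparison, and your speculation about the factor $16$ arising from ``the factor $2$ in the branched cover, the $\pm1$ framing, and the quadratic dependence of the Casson formula'' is not backed by any computation. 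As you yourself concede, the ``delicate part'' is left open.

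What the paper actually does is stay entirely in $S^{3}$ and exploit Ohtsuki's explicit closed formula for $\widehat{\Theta}_K(t)$ of a genus one knot in terms of the linking matrix $\begin{pmatrix} n & \ell \\ \ell & m\end{pmatrix}$ of the spine tangle $T$ together with framing-independent finite type invariants $v_2^{xx},v_2^{yy},v_2^{xy},v_3$ of $T$. The cosmetic crossing changes only $n\mapsto n\pm1$ while leaving the unframed tangle, hence all the $v_i$, unchanged; setting $\widehat{\Theta}_K(t)=\widehat{\Theta}_{K'}(t)$ then forces $d=0$ (recovering $\Delta_K(t)=1$), $v_2^{yy}=0$, and $\ell\in\{0,-1\}$. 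Plugging these back into Ohtsuki's formula gives $\widehat{\Theta}_K(-1)-\widehat{\Theta}_K(1)=16v_3$ on the nose, and only then are the identifications with $\lambda(\Sigma_2(K))$ and $2w_3(K)$ invoked. The missing idea in your sketch is precisely this: the explicit genus-one formula and the framing-independence of the tangle invariants $v_i$, which together reduce the problem to a two-line polynomial computation rather than an abstract comparison of surgery descriptions.
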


The cosmetic crossing conjecture has been confirmed for several cases; 2-bridge knots \cite{tor}, fibered knots \cite{kal}, knots whose double branched coverings  are L-spaces with square-free 1st homology \cite{lm}, and some satellite knots \cite{bk}. Except the last satellite cases and the unknot, all the knots mentioned so far, including knots treated in Theorem \ref{theorem:genus-one}, has non-trivial Alexander polynomial.

Theorem \ref{theorem:main-trivialA} gives examples of non-satellite knots with trivial Alexander polynomial satisfying the cosmetic crossing conjecture. Let $K=P(p,q,r)$ be the pretzel knot for odd $p,q,r$. Obviously, as long as $K$ is non-trivial, $g(K)=1$. The Alexander polynomial of $K$ is  
\[ \Delta_K(t) = \frac{pq+qr+rp+1}{4}t + \frac{-2pq-2qr-2rp+1}{2} + \frac{pq+qr+rp+1}{4}t^{-1}.\]
Hence, for example, the pretzel knot $P(4k+1,4k+3,-(2k+1))$ has the trivial Alexander polynomial. 
\begin{corollary}
\label{corollary:pretzel}
If $k\equiv 1,2 \pmod 4$, the pretzel knot $P(4k+1,4k+3,-(2k+1))$ satisfies the cosmetic crossing conjecture.
\end{corollary}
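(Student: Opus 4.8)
The plan is to derive the corollary directly from Theorem~\ref{theorem:main-trivialA}, so the whole task reduces to (i) checking that $K=P(4k+1,4k+3,-(2k+1))$ meets the hypotheses of that theorem and (ii) computing the residue of $\lambda(\Sigma_2(K))-2w_3(K)$ modulo $16$ as a function of $k$. For (i), an odd pretzel knot $P(p,q,r)$ bounds the obvious genus one pretzel surface, so $g(K)=1$ whenever $K$ is non-trivial. Moreover $pq+qr+rp=(4k+1)(4k+3)-(4k+3)(2k+1)-(2k+1)(4k+1)=-1$, so in the displayed Alexander polynomial the coefficient $\tfrac{pq+qr+rp+1}{4}$ of $t^{\pm1}$ vanishes; since $\Delta_K$ is then constant and $\Delta_K(1)=\pm1$, we get $\Delta_K(t)=1$. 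In particular $|H_1(\Sigma_2(K);\Z)|=|\Delta_K(-1)|=1$, so $\Sigma_2(K)$ is an integral homology sphere and $\lambda(\Sigma_2(K))$ is defined.

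For the Casson invariant I would use the standard identification of the double branched cover of a three-strand pretzel knot with a Seifert fibered homology sphere: since $4k+1$, $4k+3$ and $2k+1$ are pairwise coprime (successive differences are $2$ and $\pm1$), $\Sigma_2(K)$ is, up to orientation, the Brieskorn sphere $\Sigma(4k+1,4k+3,2k+1)$. I would then feed these three integers into the Fintushel--Stern/Neumann--Wahl closed formula for the Casson invariant of a Brieskorn sphere (the expression of $\lambda(\Sigma(a_1,a_2,a_3))$ through the signature of the Milnor fiber, equivalently through Dedekind sums $s(\cdot,\cdot)$), taking care of the sign introduced by the negative pretzel parameter since $\lambda(-M)=-\lambda(M)$. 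After the simplifications afforded by pairwise coprimality, the resulting rational expression should, upon reduction modulo $16$, collapse to a quantity depending only on $k\bmod 4$.

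In parallel I would compute $w_3(K)=\tfrac1{36}V_K'''(1)+\tfrac1{12}V_K''(1)$ from the Kauffman-bracket/Jones polynomial of $P(p,q,r)$, which is available in closed form for three-strand pretzel knots; since $w_3$ is the primitive integer degree $3$ invariant, it is a low-degree polynomial in $k$, and I only need it modulo $8$ in order to control $2w_3$ modulo $16$. Subtracting, I would verify that $\lambda(\Sigma_2(K))-2w_3(K)\pmod{16}$ is nonzero precisely for $k\equiv 1,2\pmod 4$, which is exactly the stated conclusion; the cases $k\equiv 0,3$ would produce a residue $\equiv 0$ and hence lie outside the reach of Theorem~\ref{theorem:main-trivialA}.

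The main obstacle I anticipate is the Casson-invariant computation: evaluating the Dedekind-sum formula for $\Sigma(4k+1,4k+3,2k+1)$, pinning down the orientation/sign convention correctly against the one used in Theorem~\ref{theorem:main-trivialA}, and then carrying out the modular arithmetic to confirm that the answer is genuinely $4$-periodic in $k$ with the asserted non-vanishing pattern. The $w_3$ side is routine once the Jones polynomial is written down, so the arithmetic reconciliation of the two pieces modulo $16$ is where the real work lies.
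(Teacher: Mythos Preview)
Your reduction to Theorem~\ref{theorem:main-trivialA} and the verification that $K=P(4k+1,4k+3,-(2k+1))$ has genus one with $\Delta_K(t)=1$ are exactly what the paper does. Where you diverge is in the computation of $\lambda(\Sigma_2(K))-2w_3(K)$: you propose computing $\lambda$ and $w_3$ \emph{separately}, via the Fintushel--Stern/Neumann--Wahl formula for $\Sigma(4k+1,4k+3,2k+1)$ on one side and the Jones polynomial of $P(p,q,r)$ on the other, and then reconciling signs and reducing modulo $16$. That route is in principle workable, but you have correctly identified that the Dedekind-sum evaluation and the orientation bookkeeping are where all the labor sits, and you have not actually carried them out.

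The paper bypasses both computations at once. In the proof of the main theorem it is established that $2w_3(K)=\widehat{\Theta}_K(1)$ and $\lambda(\Sigma_2(K))=\widehat{\Theta}_K(-1)$ (via Mullins' formula, using $\Delta_K(-1)=1$ and $\sigma(K)=0$), so that
\[
\lambda(\Sigma_2(K))-2w_3(K)=\widehat{\Theta}_K(-1)-\widehat{\Theta}_K(1).
\]
Ohtsuki \cite[Example~3.6]{oht1} gives the reduced $2$-loop polynomial of a genus one pretzel knot explicitly; for $P(4k+1,4k+3,-(2k+1))$ one reads off
\[
\widehat{\Theta}_K(1)=-\tfrac{1}{8}(4k+2)(4k+4)(-2k),\qquad
\widehat{\Theta}_K(-1)=-\tfrac{1}{24}(4k+2)(4k+4)(-2k),
\]
hence
\[
\lambda(\Sigma_2(K))-2w_3(K)=\tfrac{1}{12}(4k+2)(4k+4)(-2k)=-16\cdot\frac{k(k+1)(2k+1)}{12}.
\]
Since $k(k+1)(2k+1)/6=\sum_{i=1}^{k}i^{2}$ is always an integer, the right-hand side is divisible by $16$ exactly when $k(k+1)(2k+1)/12\in\Z$, i.e.\ when $\sum_{i=1}^{k}i^{2}$ is even, which happens iff $k\equiv 0,3\pmod 4$. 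Thus the obstruction is nonzero modulo $16$ precisely for $k\equiv 1,2\pmod 4$, with no Brieskorn-sphere or Jones-polynomial computation required. Your approach trades one citation (Ohtsuki's pretzel formula) for two substantial calculations and a sign check; the paper's route is both shorter and free of the orientation ambiguity you flagged.
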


\section{Cosmetic crossing of genus one knot and Seifert surface}

We review an argument of \cite[Section 2, Section 3]{bfkp} that relates a cosmetic crossing change and Seifert matrix. 

A \emph{crossing disk} $D$ of an oriented knot $K$ is an embedded disk having exactly one positive and one negative crossing with $K$. A crossing change can be seen as $\varepsilon = \pm 1$ Dehn surgery on $\partial D$ for an appropriate crossing disk $D$, and the crossing is nugatory if and only if $\partial D$ bounds an embedded disk in $S^{3} \setminus K$.

Assume that $K$ admits a cosmetic crossing with the crossing disk $D$. Then as is discussed in \cite[Section 2]{bfkp}, there is a minimum genus Seifert surface $S$ of $K$ such that $\alpha:=D \cap S$ is a properly embedded, essential arc in $S$.

If $g(S)=1$, such an arc $\alpha$ is non-separating. We take simple closed curves $a_x,a_y$ of $S$ so that
\begin{itemize}
\item $a_x$ intersects $\alpha$ exactly once.
\item $a_x$ and $a_y$ form a symplectic basis of $H_1(S;\Z)$.
\end{itemize}
Then we view $K=\partial S$ as a neighborhood of $a_x \cup a_y$ and express $K$ by a framed 2-tangle $T$ as depicted in Figure \ref{fig:genusone}.

\begin{figure}[htbp]
\includegraphics*[width=90mm]{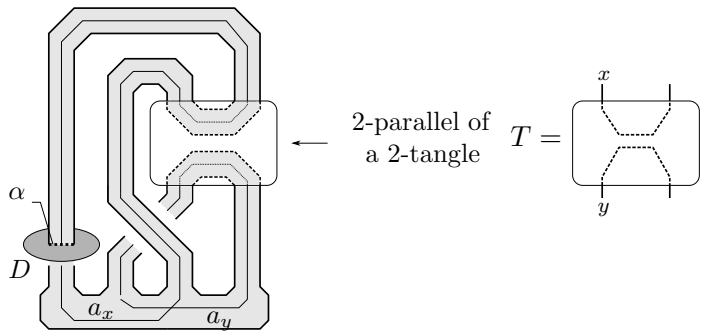}
\begin{picture}(0,0)
\put(-265,20) {$D$}
\put(-265,50) {$\alpha$}
\put(-235,6) {$a_x$}
\put(-190,3) {$a_y$}
\put(-42,95) {\footnotesize $x$}
\put(-42,45) {\footnotesize $y$}
\put(-135,76) {2-parallel of}
\put(-130,64) {a 2-tangle}
\put(-75,70) {\large $T=$}
\end{picture}
\caption{A spine tangle $T$ adapted to the cosmetic crossing} 
\label{fig:genusone}
\end{figure} 

We call the framed tangle $T$ a \emph{spine tangle} of $K$ adapted to the cosmetic crossing of a genus one knot $K$. Let $M= \begin{pmatrix}n  & \ell \\ \ell & m \end{pmatrix}$ be the linking matrix of $T$, where $n$ (resp. $m$) is the framing of the strand $x$ (resp. $y$) and $\ell$ is the linking number of two strands of $T$.

Let $K'$ be a knot obtained from $K$ by crossing change along the crossing disk $D$. Then $S$ gives rise to a Seifert surface $S'$ of $K'$ (\cite[Proposition 2.1]{bfkp}). $K'$ has a spine tangle presentation $T'$, so that $T'$ and $T$ are the same as unframed tangles, and that the linking matrix of $T'$ is $M'= \begin{pmatrix}n \pm 1  & \ell \\ \ell & m \end{pmatrix}$

With respect to the basis $\{a_x,a_y\}$, the Seifert matrix $V$ of $K$ and the Seifert matrix $V'$ of $K'$ are given by
\[ V=\begin{pmatrix} n & \ell \\ \ell \pm 1 & m \end{pmatrix}, \quad V'=\begin{pmatrix} n \pm 1 & \ell \\ \ell \pm 1 & m \end{pmatrix} \]
respectively. Since $K$ and $K'$ are the same knot, 
\[ \Delta_K(t) \doteq \det(V-tV^{T})\doteq  \det(V'-tV'{}^{T}) \doteq  \Delta_{K'}(t).\]
By direct computation, this implies that 
\begin{equation}
\label{eqn:aslice} m=0. 
\end{equation}
In particular, $K$ is algebraically slice.

\section{2-loop polynomial of genus one knot}

Here we quickly review the 2-loop polynomial. For details, see \cite{oht1}.
Let $\mathcal{B}$ be the space of open Jacobi diagram.
For a knot $K$ in $S^{3}$, let $Z^{\sigma}(K) \in \mathcal{B}$ be the Kontsevich invariant of $K$, viewed so that it takes value in $\mathcal{B}$ by composing the inverse of the Poincar\'e-Birkoff-Witt isomorphism $\sigma:  \mathcal{A}(S^{1})\rightarrow \mathcal{B}$.

A Jacobi diagram whose edge is labeled by a power series $f(\hbar)= c_0+c_1\hbar+ c_2\hbar^{2}+c_3\hbar^{3}+\cdots $ represents the Jacobi diagram 
\[ \linef = c_0 \linefa + c_1 \linefb+ c_2\linefc +c_3 \linefd + \cdots\]
It is known that (the logarithm of) the Kontsevich invariant $Z^{\sigma}(K)$ is written in the following form \cite{gk,kri}.
\begin{align*}
\log_{\sqcup} Z^{\sigma}(K) & = \oneloop + \sum_{i:\textrm{finite}} \twoloop\\
& \qquad + ( (\ell>2)\mbox{-loop parts}).
\end{align*}
Here 
\begin{itemize}
\item $\Delta_K(t)$ is the Alexander polynomial of $K$, normalized so that $\Delta_K(1)=1$ and $\Delta_K(t)=\Delta_K(t^{-1})$ hold.
\item $\log_{\sqcup}$ is the logarithm with respect to the disjoint union product $\sqcup$ of $\mathcal{B}$, given by 
\[ \log_{\sqcup}(1+D)=D- \frac{1}{2} D\sqcup D + \frac{1}{3} D\sqcup D \sqcup D + \cdots.\]
\item $p_{i,j}(e^{\hbar})$ is a polynomial of $e^{\hbar}$.
\end{itemize}
Let
\[\Theta(t_1,t_2,t_3;K) = \sum_{\varepsilon \in \{\pm 1\}} \sum_{\sigma \in S_3} p_{i,1}(t^{\varepsilon}_{\sigma(1)})p_{i,2}(t^{\varepsilon}_{\sigma(2)})p_{i,3}(t^{\varepsilon}_{\sigma(3)}).\]
Here $S_3$ is the symmetric group of degree $3$. The \emph{2-loop polynomial} $\Theta_K(t_1,t_2) \in \Q[t_1^{\pm 1},t_2^{\pm 1}]$ of a knot $K$ is defined by
\[ \Theta_K(t_1,t_2)=\Theta(t_1,t_2,t_3;K)|_{t_3=t_1^{-1}t_2^{-1}}.\]

The \emph{reduced 2-loop polynomial} is a reduction of the $2$-loop polynomial defined by 
\[ \widehat{\Theta}_K(t)= \frac{\Theta_K(t,1)}{(t^{\frac{1}{2}}-t^{-\frac{1}{2}})^2} \in \Q[t^{\pm 1}].\]

In general, although Ohtsuki developed fundamental techniques and machineries that enable us to compute $\Theta_K(t_1,t_2)$, the computation of the 2-loop polynomial is much more complicated than the computation of the 1-loop part (i.e., the Alexander polynomial).
Fortunately, when the knot has genus one, Ohtsuki proved a direct formula of $\Theta_K(t_1,t_2)$ \cite[Theorem 3.1]{oht1}. Consequently he gave the following formula of the reduced 2-loop polynomial of genus one knots.
\begin{theorem}\cite[Corollary 3.5]{oht1}
\label{theorem:2-loop}
Let $K$ be a genus one knot expressed by using a framed 2-tangle $T$ as in Figure \ref{fig:genusone}, and let $M=\begin{pmatrix}n & \ell \\ \ell & m \end{pmatrix}$ be the linking matrix of $T$.
Then 
\begin{align*}
\widehat{\Theta}_K(t) & = \Bigl((n+m)(d-\frac{nm}{2})-\ell(\ell+\frac{1}{2})(\ell+1)+12v_3 \Bigr) \Bigl(-2-\frac{2d+1}{3}(t+t^{-1}-2) \Bigr) \\
& \quad -4\left(mv_2^{xx}+nv_2^{yy}-(\ell+\frac{1}{2})v_2^{xy}+3v_3 \right)\Delta_K(t)
\end{align*}
Here 
\begin{itemize}
\item $d=nm-\ell^2-\ell$. In particular, $\Delta_K(t) = dt +(1-2d) + dt^{-1}$.
\item $v_2^{xx},v_2^{yy}, v_2^{xy}$ (resp. $v_3$) are some integer-valued finite type invariant of $T$ whose degree is $2$ (resp. $3$), which do not depend on the framing.
\end{itemize}
\end{theorem}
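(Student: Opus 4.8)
The plan is to obtain the stated formula as a specialization of Ohtsuki's closed expression for the \emph{full} $2$-loop polynomial $\Theta_K(t_1,t_2)$ of a genus one knot (the formula referred to as \cite[Theorem 3.1]{oht1}), rather than to recompute the $2$-loop part of the Kontsevich invariant from scratch. Recall how that full formula arises: presenting $K=\partial S$ as the boundary of the once-punctured torus spanned by $a_x\cup a_y$ lets one encode $K$ by the framed $2$-tangle $T$ of Figure \ref{fig:genusone}, and the $2$-loop (theta-graph) part of $Z^{\sigma}(K)$ can then be assembled from the invariants of $T$. Concretely the three edges of the theta graph get labelled by the three variables $t_1,t_2,t_3=(t_1t_2)^{-1}$, and the resulting polynomials $p_{i,j}$ depend only on the linking matrix entries $n,m,\ell$ together with the finite type invariants $v_2^{xx},v_2^{yy},v_2^{xy}$ (degree $2$) and $v_3$ (degree $3$) of the tangle. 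Symmetrizing over $S_3\times\{\pm1\}$ as in the definition of $\Theta(t_1,t_2,t_3;K)$ produces $\Theta_K(t_1,t_2)$. Granting this, what remains is purely algebraic.

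Next I would carry out the reduction dictated by the definition $\widehat{\Theta}_K(t)=\Theta_K(t,1)/(t^{\frac12}-t^{-\frac12})^2$. Setting $t_1=t,\ t_2=1$ (so that $t_3=t^{-1}$) in the full formula and collecting terms, I expect $\Theta_K(t,1)$ to be symmetric under $t\mapsto t^{-1}$, since the $\varepsilon=-1$ term of the symmetrization inverts all variables and the $S_3$-sum supplies the transposition exchanging $t_1$ and $t_3$. The divisibility by $(t^{\frac12}-t^{-\frac12})^2=t-2+t^{-1}$ is a structural property of the $2$-loop polynomial — it is exactly what makes $\widehat{\Theta}_K(t)$ a Laurent polynomial — which I would confirm directly from the specialized formula. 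Dividing out this factor should leave a symmetric Laurent polynomial of low degree, hence a combination of the two building blocks $-2-\frac{2d+1}{3}(t+t^{-1}-2)$ and $\Delta_K(t)=dt+(1-2d)+dt^{-1}$, with $d=nm-\ell^2-\ell$. Matching the constant term and the coefficient of $t+t^{-1}$ then pins down the two scalar coefficients.

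The final step is the bookkeeping that identifies those coefficients as $(n+m)(d-\tfrac{nm}{2})-\ell(\ell+\tfrac12)(\ell+1)+12v_3$ and $-4\bigl(mv_2^{xx}+nv_2^{yy}-(\ell+\tfrac12)v_2^{xy}+3v_3\bigr)$; this is a direct substitution of the specialized $p_{i,j}$ and the definitions of $v_2^{xx},v_2^{yy},v_2^{xy},v_3$, together with the $1$-loop check that the linking matrix indeed yields $\Delta_K(t)=dt+(1-2d)+dt^{-1}$, consistent with the Seifert matrix computation of Section 2. The genuine obstacle is not this reduction but the input \cite[Theorem 3.1]{oht1} itself: deriving the closed form for $\Theta_K(t_1,t_2)$ requires the full apparatus of the $2$-loop part of the Kontsevich invariant — the rational (Aarhus) expansion, control of the IHX/STU relations, and a clasper-surgery description of the once-punctured torus Seifert surface — and is where essentially all of the difficulty resides. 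Once that is in hand, the stated formula follows by the elementary specialization and division described above.
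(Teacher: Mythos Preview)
The paper does not actually prove this statement: it is quoted verbatim as Ohtsuki's result \cite[Corollary~3.5]{oht1}, with no argument supplied beyond the remark that it is a consequence of the full formula \cite[Theorem~3.1]{oht1}. Your proposal---derive the reduced formula by specializing Ohtsuki's closed expression for $\Theta_K(t_1,t_2)$ at $t_2=1$ and dividing by $(t^{1/2}-t^{-1/2})^2$---is exactly the route by which Corollary~3.5 is obtained from Theorem~3.1 in Ohtsuki's paper, so your approach is the same as (indeed, more explicit than) what the present paper does.
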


\section{Constraint for cosmetic crossings}

We prove the Theorem \ref{theorem:main} and Theorem \ref{theorem:main-trivialA} at the same time.

\begin{theorem}
Let $K$ be a genus one knot. If $K$ admits a cosmetic crossing, then $\Delta_K(t)=1$ and $\lambda(\Sigma_2(K))-2w_3(K) \equiv 0 \pmod{16}$.
\end{theorem}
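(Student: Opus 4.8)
The plan is to use the defining property of a cosmetic crossing: $K$ and $K'$ are the same knot, so every invariant agrees, and in particular $\widehat{\Theta}_K(t)=\widehat{\Theta}_{K'}(t)$. I would evaluate both sides with Ohtsuki's formula (Theorem~\ref{theorem:2-loop}). Since $T$ and $T'$ coincide as unframed tangles, the framing-independent invariants $v_2^{xx},v_2^{yy},v_2^{xy},v_3$ are shared, and the only change is $n\mapsto n\pm1$. I would immediately impose $m=0$ from \eqref{eqn:aslice}; then $d=-\ell^2-\ell$ is unaffected by the crossing change, consistently with $\Delta_{K'}\doteq\Delta_K$.

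In the difference $\widehat{\Theta}_{K'}-\widehat{\Theta}_K$ every $n$-independent term cancels, leaving only the shift $\pm d$ in the first scalar factor and $\pm v_2^{yy}$ in the second. Writing $s=t+t^{-1}-2$ and $\Delta_K(t)=ds+1$, the equality collapses (after cancelling the common $\pm1$) to
\[
0=d\Bigl(-2-\tfrac{2d+1}{3}s\Bigr)-4v_2^{yy}(ds+1).
\]
Comparing constant terms gives $v_2^{yy}=-d/2$, and comparing coefficients of $s$ then gives $\tfrac{1}{3}d(4d-1)=0$. As $d=nm-\ell^2-\ell\in\Z$, this forces $d=0$, hence $\Delta_K(t)=1$ --- the first assertion. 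In addition $d=0$ gives $\ell(\ell+1)=0$, so $\ell\in\{0,-1\}$, and $v_2^{yy}=0$.

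For the congruence, note first that $\det K=|\Delta_K(-1)|=1$, so $\Sigma_2(K)$ is an integral homology sphere and $\lambda(\Sigma_2(K))$ is the ordinary Casson invariant. The plan is to express $w_3(K)$ and $\lambda(\Sigma_2(K))$ in terms of the remaining spine-tangle data $n,\ell,v_2^{xx},v_2^{xy},v_3$ (with $m=0$, $v_2^{yy}=0$, $\ell\in\{0,-1\}$ imposed). For $w_3(K)$ I would use the known relation between the degree $3$ part of the Kontsevich invariant and the tangle invariant $v_3$ together with the framing data. For $\lambda(\Sigma_2(K))$ I would give an explicit surgery/Seifert-matrix description of the double branched cover of the genus one knot --- here $V+V^{T}=\left(\begin{smallmatrix}2n & 2\ell\pm1\\ 2\ell\pm1 & 0\end{smallmatrix}\right)$ has determinant $-1$ --- and apply the Casson surgery formula. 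Moreover $K=K'$ also forces $w_3(K)=w_3(K')$ and $\lambda(\Sigma_2(K))=\lambda(\Sigma_2(K'))$; the resulting difference equations under $n\mapsto n\pm1$ give additional linear relations among the tangle invariants, which I expect to rigidify the problem.

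The main obstacle is the final divisibility. After substituting the cosmetic-crossing constraints into the two formulas, one must verify that the integer $\lambda(\Sigma_2(K))-2w_3(K)$ is a multiple of $16$. I expect the factor $2$ on $w_3$ and the modulus $16$ to emerge from matching the normalizations of the two invariants so that their common ``odd'' part cancels, leaving a quantity whose $16$-divisibility follows from the integrality of $v_2^{xx},v_2^{xy},v_3$ and the parity dictated by $\ell\in\{0,-1\}$ and the form of $V+V^{T}$. Pinning down the exact constants in the $w_3$ and $\lambda$ formulas and controlling the denominators so that the congruence comes out cleanly is where the real work lies.
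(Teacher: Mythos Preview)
Your derivation of $\Delta_K(t)=1$ is correct and matches the paper's: both compute $\widehat{\Theta}_K-\widehat{\Theta}_{K'}$ from Ohtsuki's formula with $m=0$, obtain a polynomial identity forcing $d\in\{0,\tfrac14\}$, hence $d=0$, and read off $v_2^{yy}=0$ and $\ell\in\{0,-1\}$.

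The second half has a genuine gap. You propose to compute $w_3(K)$ and $\lambda(\Sigma_2(K))$ separately --- the former via some relation between the Kontsevich invariant and $v_3$, the latter via a surgery description of $\Sigma_2(K)$ and the Casson surgery formula --- and then to combine them, conceding that ``pinning down the exact constants\ldots is where the real work lies.'' This plan is unfinished, and it overlooks the mechanism that actually closes the argument. The reduced 2-loop polynomial already packages both invariants at special values: by \cite[Proposition~1.1]{oht1},
\[
\widehat{\Theta}_K(1)=2w_3(K), \qquad \widehat{\Theta}_K(-1)=-\tfrac{1}{12}V'_K(-1)V_K(-1),
\]
and since $V_K(-1)=\Delta_K(-1)=1$ and $\sigma(K)=0$, Mullins' formula \cite{mul} for the Casson--Walker invariant of the double branched cover turns the second identity into $\lambda(\Sigma_2(K))$. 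You already possess $\widehat{\Theta}_K(t)$ explicitly once the constraints $d=0$, $v_2^{yy}=0$, $\ell\in\{0,-1\}$ are imposed; evaluating at $t=\pm1$ and subtracting gives directly
\[
\lambda(\Sigma_2(K))-2w_3(K)=\widehat{\Theta}_K(-1)-\widehat{\Theta}_K(1)=16v_3
\]
with $v_3\in\Z$. No separate surgery computation and no further use of $w_3(K)=w_3(K')$ or $\lambda(\Sigma_2(K))=\lambda(\Sigma_2(K'))$ is needed; those constraints are already subsumed in $\widehat{\Theta}_K=\widehat{\Theta}_{K'}$.
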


\begin{proof}

Assume that $K$ is a genus one knot admitting a cosmetic crossing. 
We express $K$ using a spine tangle $T$ adapted to the cosmetic crossing. Then as we have seen (\ref{eqn:aslice}), the linking matrix of $T$ is $M=\begin{pmatrix}n & \ell \\ \ell & 0 \end{pmatrix}$.  Moreover, for the knot $K'$ obtained by the crossing change, $K'$ has a spine tangle $T'$ which is identical with $T$ as an unframed tangle with linking matrix is $M'=\begin{pmatrix}n\pm 1 & \ell \\ \ell & 0 \end{pmatrix}$.

Since the finite type invariants $v_2^{xx},v_2^{yy}, v_2^{xy}$ and $v_3$ do not depend on the framing, by Theorem \ref{theorem:2-loop},
\begin{align*}
0 & = \widehat{\Theta}_K(t)-\widehat{\Theta}_{K'}(t)\\
 & = d(-2-\frac{2d+1}{3}(t+t^{-1}-2)) -4v_{2}^{yy}( dt +(1-2d) + dt^{-1})\\
&= d\left(-\frac{2d+1}{3}-4v_2^{yy}\right) t + \frac{d(4d-4)}{3}+4v_2^{yy}(2d-1) + d\left(-\frac{2d+1}{3}-4v_2^{yy} \right)t^{-1}.
\end{align*}
Therefore 
\begin{equation}
\label{eqn:consequence}d\left(-\frac{2d+1}{3}-4v_2^{yy}\right) = \frac{d(4d-4)}{3}+4v_2^{yy}(2d-1)=0.
\end{equation}

If $d\neq 0$, by (\ref{eqn:consequence}) $d=\frac{1}{4}$. Since $d\in\Z$, this is a contradiction so we conclude $d=0$ and $\Delta_K(t)=1$.

Then by (\ref{eqn:consequence}), $d=0$ implies $v_2^{yy}=0$. Moreover, since $d=nm-\ell^2 -\ell =-\ell(\ell+1)$, we get $\ell=0, -1$. 
Thus by Theorem \ref{theorem:2-loop}, the reduced 2-loop polynomial is
\[ \widehat{\Theta}_K(t)=12v_3\left(-2 - \frac{1}{3}(t+t^{-1}-2)\right)-4 \left(-\left(\ell +\frac{1}{2}\right) v_2^{xy}-3v_3\right) \]
hence
\[ \widehat{\Theta}_K(1)=-12v_3 + 4\left( \ell +\frac{1}{2} \right)v_2^{xy}, \  \widehat{\Theta}_K(-1)=4v_3 + 4\left( \ell +\frac{1}{2} \right)v_2^{xy}.\]
On the other hand, by \cite[Proposition 1.1]{oht1}
\[ \widehat{\Theta}_K(1)= 2w_3(K), \quad \widehat{\Theta}_K(-1) =-\frac{1}{12}V'_K(-1)V_K(-1).\]
Since $\Delta_K(-1) = V_K(-1)=1$, by Mullins' formula of the Casson-Walker invariant $\lambda_{w}$  of the double branched coverings \cite{mul},
\[ \lambda_w(\Sigma_2(K)) = -\frac{V'_K(-1)}{6V_K(-1)} + \frac{\sigma(K)}{4}\]
we get
\[ \widehat{\Theta}_K(-1)=\frac{1}{2}\lambda_w(\Sigma_2(K)).\]
For an integral homology sphere, the Casson invariant $\lambda$ is twice of the Casson-Walker invariant $\lambda_{w}$ hence we conclude
\begin{align*}
\lambda(\Sigma_2(K))-2w_3(K) &=\widehat{\Theta}_K(-1)-\widehat{\Theta}_K(1)=16 v_3.
\end{align*}
\end{proof}

\begin{proof}[Proof of Corollary
\ref{corollary:pretzel}]
The reduced $2$-loop polynomial of genus pretzel knots $P(p,q,r)$ was given in \cite[Example 3.6]{oht1}. In particular, for $K=P(4k+1,4k+3,-(2k+1))$, $\widehat{\Theta}_K(1)$ and $\widehat{\Theta}_K(-1)$ are given by 
\[ \widehat{\Theta}_K(1)= -\frac{1}{8}(4k+2)(4k+4)(-2k), \widehat{\Theta}_K(-1) =  -\frac{1}{24}(4k+2)(4k+4)(-2k) \]
hence
\begin{align*}
\lambda(\Sigma_2(K))-2w_3(K)
&=\widehat{\Theta}_K(-1)-\widehat{\Theta}_K(1)=\frac{1}{12}(4k+2)(4k+4)(-2k)\\
& = -16 \frac{(2k+1)(k+1)k}{12}.
\end{align*}
When $k \equiv 1,2 \pmod 4$, $\frac{(2k+1)(k+1)k}{12} \not \in \Z$ hence $K$ does not admit cosmetic crossing by Theorem \ref{theorem:main-trivialA}.
\end{proof}

\section*{Acknowledgement}
The author has been partially supported by JSPS KAKENHI Grant Number 19K03490,16H02145.


\begin{thebibliography}{1}
\bibitem[BFKP]{bfkp} C. Balm, S. Friedl, E. Kalfagianni, and M. Powell,
{\em Cosmetic crossings and Seifert matrices,}
Comm. Anal. Geom. \textbf{20} (2012), 235--253.
\bibitem[BaKa]{bk} C. Balm and E. Kalfagianni, {\em Knots without cosmetic crossings,}
Topology Appl. \textbf{207} (2016), 33--42.

\bibitem[GaKr]{gk} S. Garoufalidis and A. Kricker,
{\em A rational noncommutative invariant of boundary links,} 
Geom. Topol. \textbf{8} (2004), 115--204.
\bibitem[Kal]{kal}
E. Kalfagianni,
{\em Cosmetic crossing changes of fibered knots,}
J. reine angew. Math. \textbf{669} (2012), 151--164.

\bibitem[Kir]{ki} R. Kirby (ed). {\em Problems in low-dimensional topology,}
Edited by Rob Kirby. AMS/IP Stud. Adv. Math., 2.2, Geometric topology (Athens, GA, 1993), 3--473, Amer. Math. Soc., Providence, RI, 1997. 

\bibitem[Kri]{kri} A. Kricker,
{\em The lines of the Kontsevich integral and Rozansky's rationality conjecture,}
arXiv:math/0005284.
\bibitem[LiMo]{lm} T. Lidman and A. Moore,
{\em Cosmetic surgery in L-spaces and nugatory crossings,}
Trans. Amer, Math. Soc. \textbf{369} (2017), 3639--3654. 

\bibitem[Mul]{mul}
D. Mullins,
{\em The generalized Casson invariant for 2-fold branched covers of $S^3$ and the Jones polynomial,}
Topology \textbf{32} (1993), 419--438.
\bibitem[Oht]{oht1} T. Ohtsuki, 
{\em On the 2-loop polynomial of knots,}
Geom. Topol. \textbf{11} (2007), 1357--1475.
\bibitem[Tor]{tor} I. Torisu,
{\em On nugatory crossings for knots,}
Topology Appl. \textbf{92} (1999), 119--129.

\end{thebibliography}
\end{document}